\newtheorem{thm}{Theorem}
\newtheorem{lem}[thm]{Lemma}
\newtheorem{cor}[thm]{Corollary}
\newtheorem{ques}[thm]{Question}
\theoremstyle{remark}
\newcommand{\Q}{\mathbb{Q}}
\newcommand{\vol}{\rm vol}
\newcommand{\inter}{\rm int}
\title{On the characterising slopes of hyperbolic knots}
\author{Duncan McCoy}
\address{Department of Mathematics \\
         The University of Texas At Austin \\
         Austin, TX, 78712, USA}
\email{d.mccoy@math.utexas.edu}
\date{}
\begin{document}

\begin{abstract}
A slope $p/q$ is a characterising slope for a knot $K$ in $S^3$ if the oriented homeomorphism type of $p/q$-surgery on $K$ determines $K$ uniquely. We show that when $K$ is a hyperbolic knot its set of characterising slopes contains all but finitely many slopes $p/q$ with $q \geq 3$. We prove stronger results for hyperbolic $L$-space knots, showing that all but finitely many non-integer slopes are characterising. The proof is obtained by combining Lackenby's proof that for a hyperbolic knot any slope $p/q$ with $q$ sufficiently large is characterising with genus bounds derived from Heegaard Floer homology. 
\end{abstract}
\maketitle

\section{Introduction}
Given a knot $K\subseteq S^3$, we say that $p/q\in \Q$ is a {\em characterising slope} for $K$ if the oriented homeomorphism type of the manifold obtained by $p/q$-surgery on $K$ determines $K$ uniquely. That is $p/q$ is characterising for $K$ if $S^3_K(p/q)\cong S^3_{K'}(p/q)$ for some $K' \subseteq S^3$ implies that $K=K'$\footnote{Throughout the paper, we use $Y'\cong Y$ to denote the existence of a orientation-preserving homeomorphism between $Y$ and $Y'$.}.
In general determining the set of characterising slopes for a given knot is challenging. It was a long-standing conjecture of Gordon, eventually proven by Kronheimer, Mrowka, Ozsv{\'a}th and Szab{\'o}, that every slope is a characterising slope for the unknot \cite{kronheimer07lensspacsurgeries}. Ozsv{\'a}th and Szab{\'o} have also shown that every slope is a characterising slope for the trefoil and the figure-eight knot \cite{Ozsvath2006trefoilsurgery}. Since then, Ni and Zhang -- who introduced the characterising slope terminology -- studied characterising slopes for torus knots, showing that $T_{5,2}$ has only finitely many non-characterising slopes which are not negative integers and exhibiting infinitely many characterising slopes for each torus knot \cite{Ni14characterizing}. It was later shown that any torus knot has only finitely many non-characterising slopes which are not negative integers \cite{mccoy2014torus}. More recently, Lackenby showed that every knot in $S^3$ has infinitely many characterising slopes \cite{Lackenby17characterizing}.

It was conjectured by Ni and Zhang that hyperbolic knots can have only finitely many non-characterising slopes. This was proven to be false by Baker and Motegi who produced examples of knots (including hyperbolic knots) with infinitely many non-characterising slopes \cite{baker16characterizing}. As their examples included only integer slopes as non-characterising slopes, one might wonder about the possibility of non-integer non-characterising slopes.
\begin{ques}[cf. Question 4.4 of \cite{baker16characterizing}]\label{ques:hyp}
Does every hyperbolic knot have only finitely many non-integer non-characterising slopes?
\end{ques}
As evidence for a positive answer to this question Lackenby showed that for a hyperbolic knot $p/q$ is characterising for $K$ whenever $q$ is sufficiently large \cite[Theorem~1.2]{Lackenby17characterizing}. The purpose of this paper is to strengthen this result.
\begin{thm}\label{thm:main}
A hyperbolic knot can have only finitely many non-characterising slopes with $|q|\geq 3$.
\end{thm}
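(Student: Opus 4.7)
The plan is to combine Lackenby's theorem from \cite{Lackenby17characterizing} with a Heegaard Floer genus bound. By \cite[Theorem~1.2]{Lackenby17characterizing} there is an integer $Q=Q(K)$ such that every slope $p/q$ with $|q|\ge Q$ is characterising for $K$. Only finitely many integers $q$ satisfy $3\le |q|<Q$, so it suffices to show, for each such fixed $q$, that only finitely many slopes $p/q$ fail to be characterising for $K$. Fix such a $q$ and assume for contradiction that there is an infinite sequence of knots $K_n\ne K$ and integers $p_n$ with $|p_n|\to\infty$ such that $S^3_{K_n}(p_n/q)\cong S^3_K(p_n/q)$.

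The crucial first step is to establish a uniform bound $g(K_n)\le G(K,q)$ via Heegaard Floer homology. The Ozsv\'ath--Szab\'o mapping cone formula computes $HF^+$ of $p/q$-surgery from the knot Floer complex. When $|q|\ge 3$, an analysis of this formula shows that the rank of $\widehat{HF}(S^3_{K_n}(p_n/q))$ grows at least linearly in $g(K_n)$, on top of a base contribution of size $|p_n|$ that is independent of $K_n$. On the other hand, viewing the same manifold as $p_n/q$-surgery on $K$ yields a total rank of size $|p_n|+O_K(1)$. Comparing the two forces $g(K_n)$ to be uniformly bounded. The hypothesis $|q|\ge 3$ is essential here: the analogous inequality degenerates when $|q|\le 2$, consistent with the Baker--Motegi examples \cite{baker16characterizing} of hyperbolic knots with infinitely many non-characterising integer slopes.

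With $g(K_n)$ uniformly bounded, I would then run Lackenby's strategy. By Thurston's Dehn surgery theorem, $S^3_K(p_n/q)$ is hyperbolic of volume approaching $\vol(S^3\setminus K)$ for large $n$. Inside this fixed manifold, the cores of the $p_n/q$-fillings coming from $K$ and from $K_n$ are both isotopic to short geodesics of length $O(1/|p_n|)$. The genus bound $g(K_n)\le G$ controls the cusp geometry of $S^3\setminus K_n$ enough to make Lackenby's shortest-geodesic identification applicable, forcing the two cores to coincide up to isotopy. Drilling along this common core recovers $S^3\setminus K_n\cong S^3\setminus K$, whence $K_n=K$ by Gordon--Luecke, contradicting $K_n\ne K$.

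The main obstacle is the Heegaard Floer genus bound for $|q|\ge 3$: extracting from the mapping cone formula a rank inequality that grows in $g(K_n)$, uniform in $n$ and strong enough to clash with the bounded Floer homology of the fixed surgery manifold, is the technical heart of the argument.
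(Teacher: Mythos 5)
Your overall architecture---dispose of large $|q|$ by Lackenby's theorem, then for each of the finitely many remaining $q$ extract a genus bound from Heegaard Floer homology and feed it back into Lackenby's geometric argument---is the same as the paper's. But the mechanism you propose for the crucial genus bound does not work. You claim that for $|q|\geq 3$ the rank of $\widehat{HF}(S^3_{K_n}(p_n/q))$ grows at least linearly in $g(K_n)$ on top of the base contribution $|p_n|$. This is false: the rank of $\widehat{HF}$ of a $p/q$-surgery equals $|p|$ plus a multiple of the total rank of the reduced parts of the complexes $\hat{A}_s$ in the mapping cone, and for an $L$-space knot these reduced parts all vanish. Thus large positive surgeries on hyperbolic $L$-space knots of arbitrarily large genus (e.g.\ the $(-2,3,7)$-pretzel knot and its kin) have $\widehat{HF}$ of rank exactly $p$; rank simply cannot see genus. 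The input the paper actually uses, Theorem~\ref{thm:HFKrecovery} from \cite{mccoy2014torus}, is proved by comparing \emph{absolute gradings} (correction terms) in the mapping cone formula, not ranks, and its conclusion has a different shape: it yields $g(K)=g(K')$ only once $|p|\geq 12+4q^2+4qg(K)$, i.e.\ once $|p|$ is large relative to $g(K)$ --- which suffices for the endgame but is not the uniform bound ``for all $n$'' you assert. Since you yourself identify this step as the technical heart, and the route you propose for it is a dead end, this is a genuine gap.

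Two further omissions. First, you never say how the genus bound enters the geometry: the paper's mechanism is Agol's bound $\ell_{K_n}(0/1)\leq 6(2g(K_n)-1)$ on the length of the longitude, which via Lemma~\ref{lem:length_bound} gives $\ell_{K_n}(p_n/q)\geq \sqrt{3}\,|p_n|/(6(2g(K_n)-1))$; it is this slope-length lower bound, growing linearly in $|p_n|$ once the genus is controlled, that makes the geometric limit argument of Lemma~\ref{lem:hyp_control} run. Your phrase about the genus bound ``controlling the cusp geometry'' gestures at this but supplies no argument. Second, you implicitly assume each $K_n$ is hyperbolic (you invoke the cusp of $S^3\setminus K_n$ and drill a geodesic core). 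Satellite knots do admit hyperbolic surgeries, so you need something like Lemma~\ref{lem:satellite} to replace a satellite $K_n$ by a hyperbolic companion with larger denominator and no larger genus before the geometric argument applies.
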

If we add the condition that $K$ is an $L$-space knot\footnote{We say that $K$ is an $L$-space knot if it admits {\em positive} $L$-space surgeries.}, then we can obtain stronger results, answering Question~\ref{ques:hyp} affirmatively, as well as showing many integer slopes are characterising.
\begin{thm}\label{thm:Lspace}
A hyperbolic $L$-space knot has only finitely many non-characterising slopes which are not negative integers. 
\end{thm}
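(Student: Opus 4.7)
The plan is to reduce to integer and half-integer slopes using Theorem~\ref{thm:main}, then exploit Heegaard Floer genus rigidity for $L$-space knots to handle what remains.

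By Theorem~\ref{thm:main} applied to $K$, all but finitely many slopes $p/q$ with $|q|\geq 3$ are already characterising, so it suffices to show that the non-characterising slopes with $q\in\{1,2\}$ which are not negative integers form a finite set; equivalently, among non-negative integer slopes and half-integer slopes, only finitely many can fail to be characterising.

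The central input from the $L$-space hypothesis is a sharp rigidity for the Seifert genus of any candidate knot $K'$ with $S^3_{K'}(p/q)\cong S^3_K(p/q)$. For a sufficiently positive slope $p/q$ the manifold $S^3_K(p/q)$ is an $L$-space, so $S^3_{K'}(p/q)$ is also an $L$-space, forcing $K'$ to be an $L$-space knot. Ozsv\'ath-Szab\'o's surgery formula then shows that the correction terms of a sufficiently positive surgery on an $L$-space knot determine its Alexander polynomial, so $\Delta_{K'}=\Delta_K$ and in particular $g(K')=g(K)$. For large negative half-integer slopes the surgery is no longer an $L$-space, but the Ni-Wu surgery formula still expresses the $d$-invariants of $S^3_K(p/q)$ in terms of the torsion coefficients $V_s(K)$, and for $L$-space knots these are rigid enough that matching $d$-invariants still forces $\Delta_{K'}=\Delta_K$ and hence $g(K')=g(K)$.

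With this exact genus equality available, I would re-run the geometric argument underlying Theorem~\ref{thm:main}, inserting $g(K')=g(K)$ in place of the weaker Heegaard Floer bound used there. For a general hyperbolic knot one only obtains a bound on $g(K')$ that grows with $|p|$ and $q$, and it is precisely this weakness which restricts Theorem~\ref{thm:main} to $|q|\geq 3$; once $g(K')$ is pinned to the exact value $g(K)$, the same Lackenby-style JSJ and hyperbolic-geometry reasoning succeeds at $q\in\{1,2\}$ up to finitely many exceptions. The main obstacle I anticipate is pushing the sharp genus equality to the negative half-integer regime, where the surgery is no longer an $L$-space and one must rely on the $V_s$-rigidity of $L$-space knots in the Ni-Wu formula. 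This same approach breaks down at negative integer slopes---there the $d$-invariants no longer pin down $\Delta_{K'}$---which is why such slopes must be excluded from the statement, in line with Baker-Motegi's examples \cite{baker16characterizing} of infinite families of non-characterising integer slopes.
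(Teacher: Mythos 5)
Your overall strategy is the paper's: use Heegaard Floer rigidity to pin down the genus of any knot $K'$ sharing the surgery, then feed that genus into Agol's bound $\ell(0/1)\leq 6(2g-1)$ on the length of the longitude so that Lackenby's hyperbolic-geometry argument (Lemma~\ref{lem:hyp_control} combined with Lemma~\ref{lem:length_bound}) applies for $|p|$ large at fixed small $q$. Your Heegaard Floer input is essentially a re-derivation of Theorem~\ref{thm:technical2}, which the paper simply cites; that is acceptable, although note one misreading: Theorem~\ref{thm:main} is not limited to $|q|\geq 3$ because the genus bound for general knots ``grows with $|p|$ and $q$'' --- Theorem~\ref{thm:HFKrecovery} already gives the exact equality $g(K)=g(K')$ --- but because that theorem's hypothesis simply requires $q\geq 3$. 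The $L$-space hypothesis buys the extension of this exact equality to $q\in\{1,2\}$ (for $p>0$) and $q=2$ (for $p<0$), which is precisely Theorem~\ref{thm:technical2}.

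The genuine gap is in the sentence ``the same Lackenby-style JSJ and hyperbolic-geometry reasoning succeeds at $q\in\{1,2\}$.'' The geometric argument only applies to hyperbolic $K'$; when $K'$ is a satellite one must pass to a hyperbolic companion $K''$ with $S^3_{K'}(p/q)\cong S^3_{K''}(p/q')$, and the length bound in terms of $|p|$ is then applied to $K''$, so one needs $g(K'')\leq g(K')$. By Gabai's classification of knots in solid tori with solid-torus surgeries, for $q\geq 2$ the pattern must be a cable and the genus inequality is automatic (Lemma~\ref{lem:satellite}); but for $q=1$ the pattern can be a $1$-bridge braid, and then $g(K'')\leq g(K')$ is only known when $K'$ is fibred. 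This is exactly why Theorem~\ref{thm:technical2} records that $K'$ is \emph{fibred}, not just its genus. Your positive-slope argument does implicitly supply this ($S^3_{K'}(p/q)$ an $L$-space for large $p>0$ forces $K'$ to be an $L$-space knot, hence fibred by Ghiggini--Ni), but you never invoke it, and without it the argument at positive integer slopes does not close: the companion's genus, hence the lower bound on $\ell_{K''}(p/q')$ in terms of $|p|$, would be uncontrolled. You should make the satellite reduction and the role of fibredness explicit before the proof can be considered complete.
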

As far the author is aware, this provides the only known examples of integer characterising slopes for any hyperbolic knot other than the figure-eight knot. Since torus knots are known to have only finitely many non-characterising slopes which are not negative integers this immediately yields the following corollary.
\begin{cor}
A non-satellite $L$-space knot has only finitely many non-characterising slopes which are not negative integers.
\end{cor}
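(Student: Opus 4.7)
The plan is to reduce the corollary to the two results already available by invoking the geometric trichotomy for prime knots. I would first recall that $L$-space knots are prime (this is a classical observation, since if $K = K_1 \# K_2$ had an $L$-space surgery, the Heegaard Floer complex would force one of the summands to be trivial). Consequently, by the geometrisation theorem applied to the complement of a prime knot in $S^3$, any non-satellite $L$-space knot $K$ falls into exactly one of two classes: either $K$ is a torus knot, or $K$ is a hyperbolic knot.

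Having fixed this dichotomy, the proof is a one-line case split. If $K$ is a torus knot, then the corollary follows immediately from the main result of \cite{mccoy2014torus}, which asserts that every torus knot has only finitely many non-characterising slopes that are not negative integers. If instead $K$ is hyperbolic, then since $K$ is also an $L$-space knot, $K$ is a hyperbolic $L$-space knot, and Theorem~\ref{thm:Lspace} directly yields the same conclusion.

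There is essentially no obstacle here beyond being explicit about these two inputs. The only point that requires a brief justification is the primality of $L$-space knots, which is what guarantees that the geometrisation trichotomy applies in the form "torus, satellite, or hyperbolic" (rather than having to worry about nontrivial connected sums being lumped in with satellites and requiring separate treatment). Once primality is recorded, the corollary follows by citing Theorem~\ref{thm:Lspace} and \cite{mccoy2014torus}.
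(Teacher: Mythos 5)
Your proof is correct and takes essentially the same route as the paper's: by Thurston's trichotomy a non-satellite knot is either a torus knot or hyperbolic, and the two cases are handled by the torus-knot result of \cite{mccoy2014torus} and Theorem~\ref{thm:Lspace} respectively. The detour through primality of $L$-space knots is unnecessary, since a nontrivial connected sum is already a satellite knot (via the swallow--follow torus), so the hypothesis ``non-satellite'' excludes composite knots without any Floer-theoretic input.
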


In order to show that $p/q$ is characterising for a hyperbolic knot $K$ whenever $q$ is sufficiently large, Lackenby shows that for any other hyperbolic knot $K'$ with $S_{K'}^3(p/q)\cong S_{K}^3(p/q)$ the geometry of $K'$ is sufficiently constrained to ensure that $K=K'$ for large $q$. Key to his argument is that the length of the slope of $p/q$ (as measured on the boundary of a horoball neighbourhood of the cusp in the complement of $K'$) is bounded below by an increasing function of $|q|$ which does not depend on $K'$. 
So in order to adapt Lackenby's approach to find characterising slopes for small $q$ and large $p$, we need to bound the length of $p/q$ below by an increasing function of $|p|$ which does not depend on $K'$. Such a lower bound is obtained by combining a result of Agol which allows us to bound the length of the longitude of a knot in terms of its genus \cite[Theorem~5.1]{Agol00BoundsI} with results derived from Heegaard Floer homology which constrain the genera of two knots with a common surgery. For Theorem~\ref{thm:main} the Heegaard Floer input is hidden in the following theorem.

\begin{thm}\cite[Theorem~1.7]{mccoy2014torus}\label{thm:HFKrecovery}
Let $K,K'\subseteq S^3$ be knots such that $S_{K}^3(p/q)\cong S_{K'}^3(p/q)$. If
\[|p| \geq 12+4q^2 +4qg(K)
\quad \text{and} \quad q\geq 3,\]
then $g(K)=g(K')$.
\end{thm}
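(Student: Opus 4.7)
The plan is to use Ozsv\'ath--Szab\'o's rational surgery (mapping cone) formula to recover enough of $\widehat{HFK}(K)$ from $\widehat{HF}(S^3_K(p/q))$ in carefully chosen Spin$^c$ structures so as to read off $g(K)$, and then to transfer this information to $K'$ via the homeomorphism $S^3_K(p/q)\cong S^3_{K'}(p/q)$.

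The starting point is Ozsv\'ath--Szab\'o's detection theorem $g(K)=\max\{\,i:\widehat{HFK}(K,i)\neq 0\,\}$, which reduces the problem to matching the top Alexander gradings with non-vanishing $\widehat{HFK}$ for $K$ and $K'$. The rational surgery formula enumerates Spin$^c$ structures on $S^3_K(p/q)$ by $i\in \Z/p$ and writes $HF^+(S^3_K(p/q),i)$ as the homology of a mapping cone assembled from the subquotient complexes $A^+_s(K)$ and $B^+_s$ indexed by integers $s$ in a coset of $q\Z$ determined by $i$, with vertical maps $v_s\colon A^+_s\to B^+_s$ and horizontal maps $h_s\colon A^+_s\to B^+_{s+?}$ shifted by the framing. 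In the large-surgery regime this cone simplifies dramatically: whenever $|s|\geq g(K)$ the maps $v_s$ and $h_s$ are quasi-isomorphisms, so those summands cancel; whereas for $|s|<g(K)$ the hat version of $A^+_s$ identifies canonically with a sum of groups $\widehat{HFK}(K,j)$ over a controlled range of Alexander gradings $j$.

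The numerical hypothesis $|p|\geq 12+4q^{2}+4qg(K)$ with $q\geq 3$ is precisely what is needed to guarantee a distinguished Spin$^c$ structure $i_{0}$ such that \emph{exactly one} integer $s$ in its coset modulo $q$ lies in the interval $[-g(K),g(K)]$, and this integer equals $g(K)$. For this Spin$^c$ structure the cone strips down to an identity of the form
\[\dim \widehat{HF}\bigl(S^3_K(p/q),i_{0}\bigr)\;-\;c(p,q,i_{0})\;=\;\dim \widehat{HFK}\bigl(K,g(K)\bigr)\;>\;0,\]
where $c(p,q,i_{0})$ is an explicit contribution depending only on the underlying lens-space data. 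Transporting this identity through the homeomorphism $S^3_K(p/q)\cong S^3_{K'}(p/q)$ and its induced bijection of Spin$^c$ structures, the analogous formula on the $K'$ side forces $\widehat{HFK}(K',j)\neq 0$ for some $j\geq g(K)$, hence $g(K')\geq g(K)$. Re-running the argument with $i_{0}$ replaced by $i_{0}+q$, whose unique representative in $[-g(K),g(K)]$ is now strictly less than $g(K)$, shows that no non-vanishing $\widehat{HFK}(K',j)$ can occur for $j>g(K)$, and the two inequalities combine to give $g(K)=g(K')$.

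The principal obstacle is the bookkeeping in the large-surgery simplification: one must track exactly which $A^+_s$ contribute to each Spin$^c$ summand of the mapping cone and show that the size hypothesis on $|p|$ isolates a single $A^+_s$ at the top Alexander grading without interference from periodicity in the cone or from cosets straying outside $[-g(K),g(K)]$. The two pieces $4q^{2}$ and $4qg(K)$ in the lower bound on $|p|$ arise, respectively, from needing to separate the two adjacent Spin$^c$ structures $i_{0}$ and $i_{0}+q$ (so that the reverse inequality can be extracted without assuming an a priori bound on $g(K')$), and from needing the interval $[-g(K),g(K)]$ to fit inside a single period of the cone's $q$-periodic indexing; the additive constant $12$ absorbs the small-$|s|$ contributions near $s=0$ that must also be controlled.
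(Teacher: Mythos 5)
This theorem is not proved in the present paper: it is imported from \cite[Theorem~1.7]{mccoy2014torus}, and the only methodological remark the author makes is that it ``is proven by calculating the Heegaard Floer homology of $S^3_K(p/q)$ and $S^3_{K'}(p/q)$ using the mapping cone formula and comparing the absolute gradings.'' Your proposal follows the mapping cone route but discards exactly the ingredient flagged as essential, the absolute gradings, and replaces it with a rank count in individual $\mathrm{Spin}^c$ structures. That substitution does not work. First, the ranks of the groups $H_*(\hat{A}_s)$ do not detect the Seifert genus: one always has $H_*(\hat{A}_s)\cong \Z$ for $|s|\geq g(K)$, so your distinguished $\mathrm{Spin}^c$ structure $i_0$, whose unique relevant index is $s=g(K)$, contributes a summand of rank one carrying no information about $\widehat{HFK}(K,g(K))$; the displayed identity equating a rank defect with $\dim\widehat{HFK}(K,g(K))$ is not a consequence of the mapping cone formula. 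Worse, for any $L$-space knot \emph{every} $H_*(\hat{A}_s)$ has rank one, so every $\mathrm{Spin}^c$ summand of $\widehat{HF}(S^3_K(p/q))$ has rank one and no rank-based statement can distinguish $g(K)$ from any other integer. For such knots the genus is visible in the surgered manifold only through the correction terms, i.e.\ through the integers $V_s$ read off from the absolute $\Q$-gradings; this is precisely why the cited proof compares gradings rather than ranks. (Relatedly, $\hat{A}_s$ is a subquotient complex of $CFK^\infty$ whose homology is not ``canonically a sum of groups $\widehat{HFK}(K,j)$''; there are only spectral sequences and exact triangles relating them.)

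Second, even granting a genus-detecting quantity attached to each $\mathrm{Spin}^c$ structure, you transport it through ``the induced bijection of $\mathrm{Spin}^c$ structures'' as if the homeomorphism $S^3_K(p/q)\cong S^3_{K'}(p/q)$ matched the affine $\Z/p$-labellings coming from the two mapping cones. It need not: the identification of $\mathrm{Spin}^c(S^3_K(p/q))$ with $\Z/p$ depends on the surgery description, and the homeomorphism can permute labels by an affine self-equivalence of the linking form. Pinning down which summand on the $K'$-side corresponds to $i_0$ is itself part of the work, and again it is done in \cite{mccoy2014torus} by comparing absolute gradings (the towers in $HF^+$ have lens-space-like bottom gradings that rigidify the correspondence). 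Without addressing both points, the step ``hence $\widehat{HFK}(K',j)\neq 0$ for some $j\geq g(K)$'' does not follow. The heuristic accounting for the constants $4q^2$ and $4qg(K)$ is plausible in spirit, but it cannot be assessed until the two gaps above are repaired.
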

The stronger conclusions of Theorem~\ref{thm:Lspace} comes from a corresponding result for $L$-space knots. 

\begin{thm}\cite[Theorem~1.8]{mccoy2014torus}\label{thm:technical2}
Suppose that $K$ is an $L$-space knot. If $S_{K}^3(p/q)\cong S_{K'}^3(p/q)$ for some $K'\subseteq S^3$ and either
\begin{enumerate}[(i)]
\item $p\geq 12+4q^2+4qg(K)$ or
\item $p\leq -(12+4q^2 +2qg(K))$ and $q\geq 2$
\end{enumerate}
holds, then $g(K)=g(K')$ and $K'$ is fibred.
\end{thm}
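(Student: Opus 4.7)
The plan is to compare the Heegaard Floer homology of the common surgery, computed once from $K$ and once from $K'$ via the Ozsv\'ath--Szab\'o--Ni--Wu rational surgery formula, and extract enough information to pin down both the genus and the top-dimensional knot Floer group of $K'$.

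First, since $K$ is an $L$-space knot, its full knot Floer complex is determined (up to filtered chain homotopy) by $\Delta_K(t)$ and $g(K)$; in particular, each subquotient $A^+_s(K)$ has an explicit structure, the integers $V_s(K)$ are computable from the Alexander polynomial, and $V_s(K) = 0$ exactly when $s \geq g(K)$. Plugging this into the mapping cone formula for $p/q$-surgery yields a complete calculation of the $d$-invariants of $S^3_K(p/q)$ in every $\mathrm{spin}^c$ structure, together with the structure of $HF^+_{\mathrm{red}}$. The purpose of the numerical hypothesis in (i) or (ii) is to guarantee that $|p|$ is large enough compared to $q$ and $g(K)$ that, for each relevant $\mathrm{spin}^c$ structure, a single summand $A^+_s$ of the mapping cone controls the output, making individual $V_s(K)$ visible as isolated $d$-invariants.

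Now I would apply the surgery formula a second time, this time to $K'$, and use the assumed identification of surgeries. Matching $d$-invariants $\mathrm{spin}^c$ structure by $\mathrm{spin}^c$ structure (up to the usual conjugation ambiguity) forces $V_s(K') = V_s(K)$ for every relevant $s$. Since $V_s$ is non-increasing in $s$ and vanishes precisely for $s \geq g$, this forces $g(K') = g(K)$. To upgrade this to fibredness of $K'$, I would then inspect the top $\mathrm{spin}^c$ structure $s = g(K')$: the mapping cone expresses the rank of $HF^+_{\mathrm{red}}(S^3_{K'}(p/q))$ in that $\mathrm{spin}^c$ structure in terms of the rank of $\widehat{HFK}(K', g(K'))$, while the $K$ side, using $\widehat{HFK}(K, g(K)) \cong \mathbb{F}$ for the $L$-space knot $K$, computes that rank to be $1$. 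Hence $\widehat{HFK}(K', g(K'))$ has rank $1$, and by the Ghiggini--Ni fibring criterion $K'$ is fibred.

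The main obstacle is the numerical bookkeeping. One has to verify that the bounds in (i) and (ii) genuinely isolate the desired summands in the mapping cone, and this is delicate because positive and negative surgeries behave asymmetrically: this asymmetry is visible in the coefficient $4qg(K)$ appearing in (i) versus $2qg(K)$ in (ii), and in the extra hypothesis $q \geq 2$ in the negative case. Tracking which $\mathrm{spin}^c$ structures of the surgery see the ``top'' $A^+_s$ summand for $K$ and for $K'$ simultaneously, and ruling out cross-contamination between $s$ and $s \pm p$ representatives in the enumeration, is where most of the technical work goes.
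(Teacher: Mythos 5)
First, a point of context: the paper does not prove this statement. It is imported verbatim from \cite[Theorem~1.8]{mccoy2014torus}, and the only guidance given here is the remark that Theorem~\ref{thm:HFKrecovery} (and its extension to $L$-space knots) is proved by ``calculating the Heegaard Floer homology of $S^3_K(p/q)$ and $S^3_{K'}(p/q)$ using the mapping cone formula and comparing the absolute gradings.'' Your overall strategy is consistent with that description, but there is a genuine gap at the central step, where you pass from $V_s(K')=V_s(K)$ to $g(K')=g(K)$. For an arbitrary knot $K'$ the invariants $V_s(K')$ vanish precisely for $s\geq \nu^+(K')$, and $\nu^+(K')\leq g(K')$ can be strict; so matching the $V_s$ yields at best $g(K')\geq \nu^+(K')=\nu^+(K)=g(K)$, with no upper bound on $g(K')$. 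In case (i) this is repairable, and in fact the $V_s$ are not needed for it: the hypothesis forces $p/q>2g(K)-1$, so $S^3_K(p/q)$, and hence $S^3_{K'}(p/q)$, is an $L$-space obtained by positive surgery, whence $K'$ is itself an $L$-space knot; this gives fibredness immediately and makes the identity $\nu^+(K')=g(K')$ legitimate.

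In case (ii) the gap is fatal to the argument as written. For $p<0$ one passes to the mirrors, $S^3_{\overline{K}}(|p|/q)\cong S^3_{\overline{K'}}(|p|/q)$, and the $d$-invariants of a positive surgery on $\overline{K'}$ are governed by $\max\bigl(V_a(\overline{K'}),V_{-b}(\overline{K'})\bigr)$ with $a\geq 0\geq b$. But the mirror of a nontrivial $L$-space knot satisfies $V_s(\overline{K})=0$ for all $s\geq 0$, so $S^3_{\overline{K}}(|p|/q)$ has the $d$-invariants of the corresponding lens space and the comparison forces only $\nu^+(\overline{K'})=0$ --- it carries essentially no information about $g(K')$. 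In this case both the genus and the fibredness must be extracted from the \emph{reduced} Floer homology: one compares the absolutely graded groups $H_*(A^{\mathrm{red}}_s)$ appearing in the mapping cone (which for $\overline{K}$ are nonzero and determined by $\Delta_K$ and $g(K)$), reads off $g(K')$ as the extremal $s$ for which these are nontrivial, and identifies the rank of $\widehat{HFK}(K',g(K'))$ from the top-graded piece before invoking Ghiggini--Ni. You invoke the reduced homology only for the fibredness claim, and even there the argument is circular as stated, since speaking of ``the top $\mathrm{spin}^c$ structure $s=g(K')$'' presupposes that $g(K')$ has already been determined. Carrying this out, and explaining why the weaker coefficient $2qg(K)$ together with $q\geq 2$ suffices in the negative case, is the real content of the cited theorem.
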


If one wishes to prove that hyperbolic knots have only finitely many non-integer non-characterising slopes, then it suffices to prove an analogue of Theorem~\ref{thm:HFKrecovery} that applies to half-integer surgeries. Theorem~\ref{thm:HFKrecovery} is proven by calculating the Heegaard Floer homology of $S^3_K(p/q)$ and $S^3_{K'}(p/q)$ using the mapping cone formula and comparing the absolute gradings. Just as Theorem~\ref{thm:HFKrecovery} can be extended to Theorem~\ref{thm:technical2} for $L$-space knots, it is probable that this approach can yield results in the half-integer case for other knots with simple knot Floer homology. However, it seems unlikely to the author that an unconditional statement for half-integer surgeries can be achieved by this approach alone.

\section{The proof}
Given a 3-manifold $M$ with a toroidal boundary component and a slope $\sigma$ on this boundary component, we will use $M(\sigma)$ to denote the Dehn filling along $\sigma$. If $K$ is a knot in $S^3$ we will use $S^3_K$ to denote the knot exterior $S^3\setminus \inter(N(K))$. So $p/q$-surgery on $K$ will be denoted by $S_K^3(p/q)$. A 3-manifold $M$ is hyperbolic if its interior admits a complete finite-volume hyperbolic structure. A knot $K\subseteq S^3$ is hyperbolic, when $S^3_K$ is hyperbolic. 
Recall that Mostow rigidity guarantees that if two hyperbolic 3-manifolds are homeomorphic, then they are isometric. So we may assume that geometric features of a hyperbolic 3-manifold, such as the volume or the shortest closed geodesic, are preserved by homeomorpisms. 

Given a slope $\sigma$ on the boundary of a compact hyperbolic 3-manifold one can assign a length to $\sigma$ by choosing a horoball neighbourhood $N$ of the cusps of $M$. There is a natural Euclidean metric on $\partial N$ and we say the length of $\sigma$ is the length of the shortest curve on $\partial N$ with the slope of $\sigma$. In general, the length of $\sigma$ depends on the choice of horoball neighbourhood. However, if $M$ has only a single cusp, then there is a unique choice of maximal horoball. Given a slope $p/q$ for $S^3_K$, we will use $\ell_K(p/q)$ to denote the length of $p/q$ with respect this maximal horoball neighbourhood.

\subsection{Slope lengths}
The first step is to verify the following proposition, which is a mild reformulation of \cite[Theorem~3.1]{Lackenby17characterizing}. 
\begin{lem}\label{lem:hyp_control}
Let $K\subseteq S^3$ be a hyperbolic knot. There are constants $C_1$ and $C_2$ such that if $K'$ is a hyperbolic knot with $S_{K}^3(p/q)\cong S_{K'}^3(p/q')$ for $\ell_{K'}(p/q')>C_1$ and $|p|+|q|>C_2$, then $K=K'$ and $q=q'$.
\end{lem}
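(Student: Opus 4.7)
The plan is to combine the Gromov--Thurston $2\pi$-theorem (or its quantitative refinements due to Hodgson--Kerckhoff and Futer--Kalfagianni--Purcell) with Mostow rigidity and the Gordon--Luecke theorem, in the spirit of Lackenby's Theorem~3.1. The broad idea is that when $\ell_{K'}(p/q')$ is large, the core of the Dehn filling sits in $S^3_{K'}(p/q')$ as a very short closed geodesic, so we can \emph{identify} it as such and then \emph{drill} it to recover the knot exterior $S^3_{K'}$.

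First I would fix $C_1$ large enough (significantly bigger than $2\pi$, using the Hodgson--Kerckhoff type effective bounds) so that whenever $\ell_{K'}(p/q') > C_1$ for \emph{any} hyperbolic knot $K'$, the filling $S^3_{K'}(p/q')$ is hyperbolic, its core geodesic has length below some small universal $\varepsilon>0$, and moreover this core is the \emph{unique} shortest geodesic in $S^3_{K'}(p/q')$. Separately, because the fixed knot $K$ has only finitely many exceptional slopes and only finitely many slopes of bounded length, I would take $C_2$ large enough that $|p|+|q|>C_2$ forces $S^3_K(p/q)$ to be hyperbolic and forces $\ell_K(p/q)$ to exceed $C_1$ as well, so the core of the $p/q$-filling on $S^3_K$ is likewise the unique shortest geodesic of $S^3_K(p/q)$.

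Next, using the hypothesis $S^3_K(p/q)\cong S^3_{K'}(p/q')$ and Mostow rigidity, I would upgrade the orientation-preserving homeomorphism to an orientation-preserving isometry $\Phi$. Since each filled manifold has a unique shortest geodesic, $\Phi$ must send core to core; removing tubular neighbourhoods of these cores yields an orientation-preserving homeomorphism $\phi\colon S^3_K\to S^3_{K'}$ of knot exteriors. By Gordon--Luecke this forces $K=K'$. Finally, $\phi$ must carry the meridian $\mu_K$ to $\pm\mu_{K'}$ (as the meridian is characterised by being the unique slope producing $S^3$), and the longitude to $\pm\lambda_{K'}$ (as the homologically trivial slope), with matching signs in an orientation-preserving setting; tracking the images of $p\mu_K+q\lambda_K$ and $p\mu_{K'}+q'\lambda_{K'}$ then gives $p/q=p/q'$, hence $q=q'$.

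The main obstacle is the step ensuring that the core of the filling is the \emph{unique} shortest geodesic of the filled manifold, uniformly across all hyperbolic $K'$ and all slopes of length exceeding $C_1$. This is precisely where the quantitative Dehn filling estimates are needed: one has to bound the length of the core in terms of $\ell_{K'}(p/q')$ and simultaneously bound below the length of all \emph{other} short geodesics in the filled manifold (which come from short geodesics of $S^3_{K'}$ together with the filling data). This uniformity is the content of Lackenby's Theorem~3.1, so the proof amounts to checking that his statement, originally phrased for the filling $p/q$ on a fixed $K$, still applies when the varying data is placed on $K'$ and is controlled by $\ell_{K'}(p/q')>C_1$ and $|p|+|q|>C_2$.
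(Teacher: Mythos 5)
Your overall strategy---make the core of each filling the unique shortest geodesic, match cores via Mostow rigidity, drill, and apply Gordon--Luecke---is exactly the endgame of the paper's proof, but there is a genuine gap in the first step. You cannot choose $C_1$ so that for \emph{every} hyperbolic knot $K'$ and every slope with $\ell_{K'}(p/q')>C_1$ the core of the filling is the unique shortest geodesic of $S^3_{K'}(p/q')$. The quantitative Dehn filling estimates you invoke (Hodgson--Kerckhoff, Futer--Kalfagianni--Purcell) control the length of the \emph{core} and the change in volume, but they give no lower bound on the lengths of the \emph{other} geodesics: those are small perturbations of geodesics of $S^3_{K'}$ itself, and there is no universal lower bound on the systole of a hyperbolic knot complement (one can produce hyperbolic knots in $S^3$ whose exteriors contain arbitrarily short geodesics, e.g.\ by $1/n$-filling one cusp of a fixed hyperbolic link exterior). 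So for some $K'$ the core need not be the shortest geodesic, no matter how large $C_1$ is. You acknowledge this obstacle in your final paragraph, but resolving it by ``checking that Lackenby's Theorem~3.1 still applies'' is circular: the lemma you are asked to prove \emph{is} a reformulation of that theorem.

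The paper closes this gap with a compactness argument that you would need to supply. Arguing by contradiction, a sequence of counterexamples $(K_i,p_i/q_i,p_i/q_i')$ yields exteriors $S^3_{K_i}$ of uniformly bounded volume, since once $\ell_{K_i}(p_i/q_i')>4\pi$ the Futer--Kalfagianni--Purcell bound gives $\left(\tfrac{3}{4}\right)^{3/2}\vol(S^3_{K_i})\leq \vol(S^3_{K_i}(p_i/q_i'))=\vol(S^3_{K}(p_i/q_i))<\vol(S^3_K)$. If infinitely many of these exteriors are distinct, Thurston's theorem on bounded-volume sequences realises them as Dehn fillings of a single hyperbolic manifold $M$ with at least two cusps; filling the remaining cusp along $p_i/q_i'$ then produces at least two geodesics of length tending to zero in $S^3_K(p_i/q_i)$, contradicting the hyperbolic Dehn surgery theorem applied to the \emph{fixed} manifold $S^3_K$, which guarantees a unique short geodesic for large $i$. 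Only in the remaining case, where the $S^3_{K_i}$ take finitely many values and Thurston's theorem can be applied to a fixed $S^3_{K'}$ with a sequence of distinct slopes, does your ``unique shortest geodesic, drill, Gordon--Luecke'' argument go through as written.
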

Although a proof is provided for completeness, the reader should note that our proof is exactly the same as Lackenby's except with minor changes to emphasise the role of slope length and a restriction to knots in $S^3$. Three theorems from hyperbolic geometry are needed in  the proof. They are taken largely unchanged from Section 2 of \cite{Lackenby17characterizing}, where further discussion can be found. First, a precise version of Thurston's hyperbolic Dehn surgery theorem is required.  
\begin{thm}\cite[Theorem~2.1]{Lackenby17characterizing}.\label{thm:Thurston_hyp}
Let $M$ be a compact orientable hyperbolic 3-manifold with toroidal boundary components $T_1, \dots, T_n$. Consider a sequence of slopes
$(\sigma_1^i, \dots, \sigma_n^i)$, where $\sigma_j^i$ lies on $T_j$ and $\sigma_j^i\neq \sigma_j^{i'}$ if $i\neq i'$. Then for all sufficiently large $i$, $M(\sigma_1^i,\dots, \sigma_n^i)$ is hyperbolic and the cores of the filling solid tori are geodesics with lengths tending to zero as $i\rightarrow \infty$. Moreover there is $\varepsilon>0$ independent of $i$ such that all other primitive geodesics in $M(\sigma_1^i,\dots, \sigma_n^i)$ have length at least $\varepsilon$. For any horoball neighbourhood $N$ of the cusps of $M$, there is a horoball neighbourhood $N_i$ of the cusps of $M(\sigma_1^i,\dots, \sigma_n^i)$ such that the inclusion
\[ M \setminus N \rightarrow M(\sigma_1^i,\dots, \sigma_n^i)\setminus N_i
\]
is bilipschitz with constant tending to one as $i \rightarrow \infty$
\end{thm}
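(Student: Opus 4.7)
The plan is to follow the argument pioneered by Thurston in his notes, combined with the Neumann--Zagier analysis of Dehn surgery space. The first step is to set up deformation theory for hyperbolic structures on $M$: the space of (possibly incomplete) hyperbolic structures on $M$ near the complete one is parametrized by complex deformation parameters $(u_1,\dots,u_n)$, one for each cusp, with $u_j=0$ recovering the complete structure. For each slope $\sigma$ on $T_j$ the completion condition defines a real-analytic ``generalized Dehn surgery function'' $(p_j,q_j)(u_j)$; when this function takes an actual slope as its value, the associated incomplete structure completes to a genuine hyperbolic structure on the filled manifold by adjoining a geodesic core along the filling torus, whose complex length is itself an analytic function of $u_j$ vanishing to first order at $u_j=0$.

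The second step is to check that the hypothesis $\sigma_j^i \neq \sigma_j^{i'}$ for $i \neq i'$ forces the required parameter values $u_j^i$ to tend to $0$. Since the slopes $\sigma_j^i$ are distinct primitive classes on a torus, their norms in any peripheral basis diverge, and the generalized surgery function is (essentially, after a change of variables) a proper map sending $0$ to $\infty$; hence $u_j^i \to 0$. Thurston's existence theorem then applies uniformly on a small neighbourhood of the origin and produces, for all sufficiently large $i$, a complete hyperbolic structure on $M(\sigma_1^i,\dots,\sigma_n^i)$ in which each filling core is a geodesic whose length is an analytic function of $u_j^i$ vanishing at the origin. This gives both hyperbolicity for large $i$ and the fact that the core lengths tend to zero.

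Finally, to extract the uniform lower bound $\varepsilon$ on non-core primitive geodesics and the bilipschitz statement, I would invoke geometric convergence. Thurston's deformation provides a family of developing maps whose restrictions to a fixed compact core $M \setminus N$ converge smoothly to the identity as $u_j^i \to 0$; after composing with the cusp-to-tube modification on the complement of $N$, this yields diffeomorphisms $M \setminus N \to M(\sigma_1^i,\dots,\sigma_n^i)\setminus N_i$ with bilipschitz constant tending to one, where $N_i$ is the union of suitable horoball neighbourhoods of the remaining cusps with Margulis tubes around the new cores. The uniform $\varepsilon$ is then obtained by a thick--thin argument: any primitive geodesic of length below the Margulis constant either is the core of a Margulis tube (and so equals one of the filling cores) or lies in the thick part $M(\sigma_1^i,\dots,\sigma_n^i) \setminus N_i$, where the bilipschitz comparison with the thick part of $M$ gives a lower bound inherited from the positive systole of $M$. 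The main technical obstacle is arranging all the analytic and geometric convergence estimates to be uniform as $i$ varies, and it is precisely this uniformity that the Neumann--Zagier analysis of the Dehn surgery parameter space is designed to supply.
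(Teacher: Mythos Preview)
The paper does not prove this statement: it is quoted verbatim as \cite[Theorem~2.1]{Lackenby17characterizing} and used as a black box, with the author noting that the three hyperbolic-geometry inputs ``are taken largely unchanged from Section~2 of \cite{Lackenby17characterizing}, where further discussion can be found.'' So there is no in-paper proof to compare your proposal against.

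That said, your sketch is the standard Thurston/Neumann--Zagier route and is broadly sound: parametrize nearby (incomplete) hyperbolic structures by holonomy parameters $u_j$, use properness of the generalized Dehn surgery coordinates to force $u_j^i\to 0$ from the distinctness of the $\sigma_j^i$, read off hyperbolicity and vanishing core length from the analytic dependence on $u$, and then obtain the bilipschitz and short-geodesic statements from geometric convergence. Two small points are worth tightening. First, your thick--thin dichotomy is phrased circularly: a primitive geodesic shorter than the Margulis constant is \emph{automatically} the core of a Margulis tube, so the real content is that any such tube must coincide with one of the filling tubes; this is what the bilipschitz comparison with $M$ (which has only cusps in its thin part, no short geodesics) actually gives you, so the $\varepsilon$ bound should be deduced \emph{after} the bilipschitz statement, not alongside it. Second, in the theorem as stated $N_i$ is a horoball neighbourhood of the \emph{cusps} of the filled manifold, not a union of cusp neighbourhoods and Margulis tubes about the cores; the map is only an inclusion, so when all tori are filled $N_i$ is empty and $M\setminus N$ simply embeds bilipschitzly into the closed filled manifold.
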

Secondly, we need to know that for an infinite collection of hyperbolic 3-manifolds of bounded volume, some subsequence of them can be obtained by Dehn filling on another hyperbolic manifold with more cusps. See \cite{ThurstonNotes} or \cite[Theorem E.4.8]{BenedettiHyperbolic}.
\begin{thm}\cite[Theorem~2.2]{Lackenby17characterizing}.\label{thm:magic}
Let $M_i$ be a sequence of distinct oriented hyperbolic 3-manifolds with volume bounded above by $V$. Then there is a hyperbolic 3-manifold $M$ with volume at most $V$ and toroidal boundary components $T_1, \dots, T_n$ such that there is a subsequence of the $M_i$ and a sequence of slopes $(\sigma_1^i, \dots, \sigma_n^i)$ such that
\[M_i= M(\sigma_1^i, \dots, \sigma_n^i)\] and $\sigma_j^i\neq \sigma_j^{i'}$ for $i\neq i'$.
\end{thm}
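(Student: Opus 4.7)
My plan is to deduce this statement from the Jorgensen--Thurston compactness theorem for hyperbolic $3$-manifolds combined with Theorem~\ref{thm:Thurston_hyp}. In broad strokes: extract a geometric limit $M$ of the sequence $M_i$, and then recognise, via the reverse direction of Thurston's hyperbolic Dehn surgery theorem, that the tails of the (sub)sequence are Dehn fillings on $M$ with slopes tending to infinity.

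First I would invoke Gromov's compactness theorem. Pick a basepoint $x_i$ lying in the $\mu$-thick part of $M_i$ (non-empty by the Margulis lemma, since $\vol(M_i)\leq V$), and pass to a subsequence $(M_i,x_i)$ converging in the pointed bilipschitz (equivalently, pointed Gromov--Hausdorff) topology to some complete pointed hyperbolic $3$-manifold $(M,x)$. By lower semicontinuity of volume under geometric convergence, $\vol(M)\leq V$, and so $M$ has finitely many cusps, say $T_1,\dots,T_n$.

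Next I would argue that for all sufficiently large $i$ the manifold $M_i$ is a Dehn filling on $M$. Since the $M_i$ are pairwise distinct, Mostow rigidity rules out the subsequence being eventually isometric to $M$, so the geometric convergence must be accompanied by genuine collapsing: each $M_i$ contains short primitive geodesics whose tubular neighbourhoods are the ``missing pieces'' when one compares $M_i$ to $M$. This is precisely the picture of Theorem~\ref{thm:Thurston_hyp} run in reverse; the short core geodesics are surgery cores, so $M_i = M(\sigma_1^i,\dots,\sigma_n^i)$ for some slopes $\sigma_j^i$ on $T_j$.

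Finally, a diagonal argument delivers genuinely varying slopes. For each fixed $j$, if some infinite subsequence had $\sigma_j^i$ constant, equal to $\tau$, one could replace $M$ by the cusped hyperbolic manifold $M(\tau)$ (which has $n-1$ cusps by Theorem~\ref{thm:Thurston_hyp}) and repeat the extraction on the remaining cusps. Since $n$ is finite, after finitely many such iterations every slope is forced to vary with $i$, and a further diagonal refinement ensures $\sigma_j^i\neq\sigma_j^{i'}$ for all $j$ whenever $i\neq i'$. I expect the main obstacle to be the compactness step: one has to verify that a bilipschitz limit exists and carries all the data one wants (in particular the correct number of cusps). This ultimately rests on the Margulis lemma and the uniform volume bound, which together ensure that the thick parts of the $M_i$ remain uniformly compact and admit a Cauchy subsequence.
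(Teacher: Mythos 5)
The paper offers no proof of this statement to compare against: it is imported verbatim from Lackenby's paper, which in turn refers the reader to Thurston's notes and to Benedetti--Petronio, Theorem E.4.8. Your sketch is the standard J{\o}rgensen--Thurston argument, which is exactly what those sources carry out: the Margulis lemma together with the volume bound gives uniformly compact thick parts, a pointed geometric limit $(M,x)$ exists after passing to a subsequence, volume is semicontinuous so $M$ has finite volume and finitely many cusps, and the $M_i$ are recovered as fillings of $M$ along the cusps that open up from shrinking Margulis tubes. So the overall route is the right one.

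Two steps are softer than you present them. First, recovering $M_i$ as a Dehn filling of $M$ is not literally ``Theorem~\ref{thm:Thurston_hyp} run in reverse'': that theorem only goes from fillings to geometry, and the reverse implication -- that the complement in $M_i$ of the almost-isometrically embedded thick chunk of $M$ consists precisely of finitely many Margulis tubes, whose number is uniformly bounded in terms of $V$ and eventually constant along the subsequence, and that replacing each tube by a cusp recovers $M$ -- is the actual content of the J{\o}rgensen--Thurston theorem and has to be proved, not cited. Second, your terminal diagonal argument asserts that if $\sigma_j^i$ is constantly equal to $\tau$ along a subsequence then $M(\tau)$ is hyperbolic with one fewer cusp ``by Theorem~\ref{thm:Thurston_hyp}''; that theorem guarantees hyperbolicity only for all but finitely many slopes in a varying sequence, not for one prescribed slope $\tau$, and $M(\tau)$ could a priori be an exceptional filling even though the further fillings $M_i$ are hyperbolic. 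The clean way to obtain $\sigma_j^i\neq\sigma_j^{i'}$ is already contained in the compactness step: geometric convergence forces the core geodesics of the tubes to have length tending to $0$, hence the lengths of the slopes $\sigma_j^i$ on $T_j$ tend to infinity, so each slope value is attained only finitely often and a further subsequence makes them pairwise distinct for every $j$ simultaneously. With those two points repaired, your argument is the standard proof.
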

Finally we need explicit bounds on the volume of hyperbolic 3-manifolds in terms of the length of filling curves. The upper bound is due to Thurston \cite{ThurstonNotes} and the lower bound is due to Futer, Kalfagianni and Purcell \cite[Theorem~1.1]{Futer08filling}.
\begin{thm}\cite[Theorem~2.4]{Lackenby17characterizing}\label{thm:volbound}
Let $S^3_K$ be the complement of a hyperbolic knot in $S^3$. If the slope $p/q$ has length $\ell=\ell_K(p/q)>2\pi$, then $S^3_K(p/q)$ is hyperbolic with volume satisfying
\[
\left(1-\left(\frac{2\pi}{\ell}\right)^2\right)^{3/2} \vol(S^3_K) \leq \vol(S^3_K(p/q))<\vol(S^3_K).
\]
\end{thm}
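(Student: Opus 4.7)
The plan is to assemble the three conclusions---hyperbolicity of $S^3_K(p/q)$, the strict upper bound on volume, and the explicit lower bound---from three independent classical inputs, since no single argument yields all three simultaneously.

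For hyperbolicity and the upper bound, I would invoke the $2\pi$-theorem of Gromov and Thurston: under the hypothesis $\ell > 2\pi$, Dehn filling along $p/q$ produces a complete Riemannian metric of strictly negative sectional curvature on $S^3_K(p/q)$. Combined with Geometrization, this implies that $S^3_K(p/q)$ admits a hyperbolic structure. For the strict inequality $\vol(S^3_K(p/q)) < \vol(S^3_K)$ I would then appeal to Thurston's theorem that hyperbolic Dehn filling strictly decreases volume, which follows from comparing Gromov simplicial volumes: one has $\|S^3_K(p/q)\| \leq \|S^3_K\|$ in general, and strictness comes from the nontrivial contribution of the filling solid torus, which cannot be efficiently straightened into ideal simplices.

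The lower bound is the substantive content, and I would follow the strategy of Futer, Kalfagianni and Purcell. The idea is to interpolate between $S^3_K(p/q)$ and $S^3_K$ through a one-parameter family of hyperbolic cone manifolds $M_\theta$ with cone angle $\theta \in [0,2\pi]$ along the core of the surgery solid torus: at $\theta = 2\pi$ one recovers the smooth filling $S^3_K(p/q)$, and as $\theta \to 0$ the cone structure opens to the complete finite-volume hyperbolic structure on $S^3_K$. The Schl\"afli formula for hyperbolic cone manifolds gives
\[
\frac{d}{d\theta}\vol(M_\theta) = -\tfrac{1}{2} L(\theta),
\]
where $L(\theta)$ is the length of the singular geodesic, so that
\[
\vol(S^3_K) - \vol(S^3_K(p/q)) = \tfrac{1}{2}\int_{0}^{2\pi} L(\theta)\, d\theta.
\]
Using Hodgson--Kerckhoff's deformation theory to bound $L(\theta)$ in terms of the normalised length of the filling slope---and then translating the normalised length into the cusp length $\ell$ measured on the maximal horotorus---yields the factor $\left(1 - (2\pi/\ell)^2\right)^{3/2}$ after integrating.

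The main obstacle is establishing the lower bound rigorously. Two delicate ingredients underpin it. First, one needs existence and smoothness of the full cone-deformation family $M_\theta$ for every $\theta \in [0, 2\pi]$; this rests on the Hodgson--Kerckhoff rigidity and local parametrisation theorems to guarantee that no geometric degeneration of the cone structure occurs before $\theta$ reaches $0$. Second, one needs uniform lower bounds on the tube radius about the singular geodesic, since without such control the Schl\"afli identity cannot be converted into an explicit numerical estimate on $\int L(\theta)\, d\theta$. The final step, matching the intrinsic normalised length of $p/q$ against the extrinsic quantity $\ell$, is also where the precise threshold $2\pi$ and the exponent $3/2$ appearing in the stated inequality emerge.
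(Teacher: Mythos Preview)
The paper does not supply a proof of this theorem at all: it is quoted as a black box, with the strict upper bound attributed to Thurston and the lower bound to Futer, Kalfagianni and Purcell \cite{Futer08filling}. So the relevant comparison is between your sketch and those original arguments.

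Your treatment of hyperbolicity and of the strict inequality $\vol(S^3_K(p/q))<\vol(S^3_K)$ is essentially the standard one and matches what the cited sources do.

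Your sketch of the lower bound, however, is not the Futer--Kalfagianni--Purcell argument, and it has a genuine gap. FKP do \emph{not} use cone deformations or the Schl\"afli formula. Instead they build, by hand, a smooth Riemannian metric on $S^3_K(p/q)$ that agrees with the complete hyperbolic metric on $S^3_K$ outside the maximal horoball cusp and is modified inside the filling solid torus so that all sectional curvatures lie below an explicit threshold depending on $\ell$; they then compute the volume of this metric directly and invoke the Besson--Courtois--Gallot/Boland--Connell--Souto comparison (the hyperbolic metric maximises volume among metrics with sectional curvature at most $-1$, after rescaling) to pass to the hyperbolic volume. The factor $(1-(2\pi/\ell)^2)^{3/2}$ arises from an explicit calculus computation on the model metric in the solid torus, not from integrating core lengths.

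By contrast, the cone-manifold route you outline gives $\vol(S^3_K)-\vol(S^3_K(p/q))=\tfrac12\int_0^{2\pi}L(\theta)\,d\theta$, so to recover the stated inequality you would need an upper bound on $\int L(\theta)\,d\theta$ in terms of $\ell$ and $\vol(S^3_K)$ producing exactly $[1-(1-(2\pi/\ell)^2)^{3/2}]\vol(S^3_K)$. No such estimate is available from Hodgson--Kerckhoff theory; their bounds are phrased in terms of normalised length, carry different constants, and require stronger hypotheses to guarantee the deformation exists all the way to $\theta=0$. So the assertion that Schl\"afli integration ``yields the factor $(1-(2\pi/\ell)^2)^{3/2}$'' is unsupported, and the obstacles you flag at the end are not merely technicalities but the reason this route does not deliver the stated theorem.
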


We are ready to proceed with the proof of Lemma~\ref{lem:hyp_control}. 

\begin{proof}[Proof of Lemma~\ref{lem:hyp_control}] If the constants $C_1$ and $C_2$ do not exist, then there is a sequence of hyperbolic knots $K_i$ with slopes $p_i/q_i$ and $p_i/q_i'$ such that
\begin{enumerate}[(a)]
\item $S_{K}^3(p_i/q_i)\cong S_{K_i}^3(p_i/q_i')$ for all $i$,
\item $\ell_{K_i}(p_i'/q_i')\rightarrow \infty$ and $|p_i|+|q_i|\rightarrow \infty$ as $i\rightarrow \infty$, but
\item for all $i$ we have $K_i\neq K$ or $q_i'\neq q_i$.
\end{enumerate}
We will show that such a sequence results in a contradiction.

First assume the sequence $S_{K_i}^3$ includes infinitely many distinct manifolds. By passing to a subsequence, we may assume that the $S^3_{K_i}$ are all distinct. Since $\ell_{K_i}(p_i/q_i')$ will exceed $4\pi$ for $i$ large enough, Theorem~\ref{thm:volbound} show that 
$\left(\frac{3}{4}\right)^{3/2}\vol(S^3_{K_i})\leq \vol(S_{K_i}^3(p_i/q_i'))$ for $i$ sufficiently large. However $\vol(S_{K_i}^3(p_i/q_i'))=\vol(S_{K}^3(p_i/q_i))$ is bounded above by $\vol(S_{K}^3)$. This gives the upper bound $\vol(S_{K_i}^3)\leq (\frac{3}{4})^{3/2}\vol(S_K^3)$ for all sufficiently large $i$. Thus we see that there is some $V$ such that $\vol(S_{K_i}^3)<V$ for all $i$.

By Theorem~\ref{thm:magic} we can pass to a further subsequence and assume that there is a hyperbolic $M$ of finite volume with toroidal boundary components $T_1, \dots, T_n, T_{n+1}$ and a sequence of slopes $(\sigma_1^i, \dots, \sigma_n^i)$ on $T_1, \dots, T_n$ such that $\sigma_j^i\neq \sigma_j^{i'}$ for $i\neq i'$ and $S^3_{K_i} \cong M(\sigma_1^i, \dots, \sigma_n^i, \star)$, where $\star$ denotes that we are leaving $T_{n+1}$ unfilled.
As the knot complements $S_{K_i}^3$ are distinct manifolds, we have $n\geq 1$. We may consider the slope $p_i/q_i'$ as slope a $\sigma_i$ on $T_{n+1}$. Thus, we get a sequence of slopes $\sigma_i$ such that $S_{K_i}^3(p_i/q_i')\cong M(\sigma_1^i, \dots, \sigma_n^i,\sigma^i)$ for all $i$.

Let $N$ be a horoball neighbourhood of the cusps of $M$. By Theorem~\ref{thm:Thurston_hyp}, in each $S^3_{K_i}$ there is a horoball neighbourhood $N_i$ of the cusp such that the inclusion $M\setminus N\rightarrow S^3_{K_i}\setminus N_i$ is bilipschitz with constant approaching one. Thus since $\ell_{K_i}(p_i/q_i')\rightarrow \infty$ the length of $\sigma_i$ as measured in $\partial N$ must also tend to infinity. In particular, by taking a further subsequence we can assume the slopes $\sigma_i$ are distinct.

Therefore Theorem~\ref{thm:Thurston_hyp} shows that the cores of the filling solid tori in $M(\sigma_1^i, \dots, \sigma_n^i,\sigma^i)$ are geodesics of length tending to zero. Thus for any $\varepsilon>0$, $M(\sigma_1^i, \dots, \sigma_n^i,\sigma^i)$ contains at least $n+1\geq 2$ closed geodesics of length less than $\varepsilon$ when $i$ sufficiently large. However, Theorem~\ref{thm:Thurston_hyp} also shows that there is $\delta>0$, such that for $i$ sufficiently large the core of $S_{K}^3(p_i/q_i)$ is the only geodesic of length less than $\delta$. This is clearly a contradiction.

Thus we can assume that $S_{K_i}^3$ include only finitely many distinct manifolds. By passing to a subsequence if necessary, we can further assume that there is some $K'\subseteq S^3$ such that $S_{K_i}^3\cong S_{K'}^3$ for all $i$.

We may assume that the homemorphisms $S_{K}^3(p_i/q_i)\rightarrow S_{K'}^3(p_i/q_i')$ map the shortest closed geodesic in $S_{K}^3(p_i/q_i)$ to shortest geodesic in $S_{K'}^3(p_i/q_i')$. However for $i$ sufficiently large Theorem~\ref{thm:Thurston_hyp} shows this shortest geodesic is the core of the filling solid tori in both manifolds. Thus the homeomorphism restricts to give a homeomorphism of knot complements $S_{K'}^3\cong S_{K}^3$. By the knot complement theorem this shows that $K=K'$ and that the meridian of $K$ is mapped to the meridian of $K'$ \cite{Gordon89complement}. Since the homeomorphism must also map null-homologous curves to null-homologous curves it must also preserve longitudes, showing that we also have $p_i/q_i'=p_i/q_i$. This contradicts the initial assumptions on the $p_i/q_i'$ and $K_i$.
\end{proof}
The following lemma provides the slope length bounds required to apply Lemma~\ref{lem:hyp_control}.
\begin{lem}\label{lem:length_bound}
Let $K\subseteq S^3$ be a hyperbolic knot of genus $g$. Then 
\[\ell_K(p/q) \geq \frac{\sqrt{3}\,|q|}{6}\quad\text{and}\quad \ell_K(p/q) \geq \frac{\sqrt{3}\,|p|}{6(2g-1)}\]
\end{lem}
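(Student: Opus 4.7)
The plan is to view $\ell_K(p/q)$ as the length of the Euclidean geodesic representing the homology class $p\mu + q\lambda$ on the maximal cusp torus $T$ of $S^3_K$, where $\mu$ and $\lambda$ are the meridian and longitude of $K$. Writing $A$ for $\mathrm{area}(T)$ and $\ell(\mu), \ell(\lambda)$ for the Euclidean lengths of these generating curves, a basic parallelogram/projection argument yields the two inequalities
\[
\ell_K(p/q) \;\geq\; \frac{|q|\,A}{\ell(\mu)} \qquad\text{and}\qquad \ell_K(p/q) \;\geq\; \frac{|p|\,A}{\ell(\lambda)},
\]
since the parallelogram spanned by $\mu$ and $p\mu + q\lambda$ has area $|q|A$ (and symmetrically with $\lambda$), and this equals base times height with $\ell_K(p/q)$ bounding the height from above only after dividing out the base. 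With these two inequalities in hand, everything reduces to bounding $A$ from below and $\ell(\mu),\ell(\lambda)$ from above by universal quantities.

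Next I would invoke three classical facts from hyperbolic geometry. First, the Cao--Meyerhoff lower bound on the area of a maximal cusp cross-section of a one-cusped orientable hyperbolic $3$-manifold gives $A \geq \sqrt{3}$. Second, the $6$-theorem of Agol and Lackenby implies $\ell(\mu) \leq 6$: indeed, $\mu$-filling of $S^3_K$ produces $S^3$, which is not hyperbolic, so the meridian slope cannot have length exceeding $6$. Third, Agol's bound (\cite[Theorem~5.1]{Agol00BoundsI}) on the longitude length in terms of the Thurston norm of a Seifert surface gives $\ell(\lambda) \leq 6(2g-1)$, using that $-\chi(S) = 2g-1$ for a minimal-genus Seifert surface $S$.

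Combining these ingredients with the two projection inequalities above,
\[
\ell_K(p/q) \;\geq\; \frac{|q|\sqrt{3}}{6} \qquad\text{and}\qquad \ell_K(p/q) \;\geq\; \frac{|p|\sqrt{3}}{6(2g-1)},
\]
as desired. The main obstacle is simply packaging the inputs correctly: the projection-area inequality is elementary, so the substantive content is entirely contained in the area lower bound and the two slope-length upper bounds, each of which is cited from the literature. The only care needed is to check that $\mu$ and $\lambda$ in the cusp torus are realised by the geodesic representatives whose lengths enter Agol's and the $6$-theorem's statements, which is standard for the maximal horoball neighbourhood.
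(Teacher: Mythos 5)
Your argument is correct and is essentially the paper's proof: the parallelogram/cusp-area inequality you derive by hand is exactly the content of the Cooper--Lackenby lemma $\ell_K(\alpha)\ell_K(\beta)\geq \sqrt{3}\,\Delta(\alpha,\beta)$ that the paper cites, and you then apply the same two upper bounds, namely $\ell_K(1/0)\leq 6$ from the $6$-theorem and $\ell_K(0/1)\leq 6(2g-1)$ from Agol's Theorem~5.1. The only difference is attributional (the paper packages the area-of-a-cusp step as a citation to Cooper--Lackenby rather than deriving it via a Cao--Meyerhoff-type cusp-area bound), so this is the same proof.
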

\begin{proof}
By considering the area of a cusp, Cooper and Lackenby show that \cite[Lemma~2.1]{Cooper98negatively}
\[\ell_K(\alpha)\geq \frac{\sqrt{3}\,\Delta(\alpha,\beta)}{\ell_K(\beta)},\]
where $\alpha$ and $\beta$ are any two slopes on the boundary of $K$ and $\Delta(\alpha,\beta)$ denotes their distance (cf. \cite[Lemma~8.1]{Agol00BoundsI}).
 Since $\Delta(1/0,p/q)=|q|$ and $\ell_K(1/0)\leq 6$ by the 6-theorem \cite{Agol00BoundsI, Lackenby00-6thm}, this gives the bound on $\ell_K(p/q)$ in terms of $|q|$. Since $\Delta(0/1,p/q)=|p|$ and $\ell_K(0/1)\leq 6(2g-1)$ by \cite[Theorem~5.1]{Agol00BoundsI}, this also gives the bound on $\ell_K(p/q)$ in terms of $|p|$.
\end{proof}

\subsection{Hyperbolic surgeries on satellite knots}
We also need to understand when non-hyperbolic knots can have hyperbolic surgeries.
\begin{lem}\label{lem:satellite}
Suppose that $K'$ is a satellite knot with $S_{p/q}^3(K')$ hyperbolic. Then there is a hyperbolic knot $K''$ with $S_{p/q}^3(K')\cong S_{p/q'}^3(K'')$ for some $q' > q$. Moreover if $q\geq 2$ or $K'$ is fibred, then $g(K'') \leq g(K')$
\end{lem}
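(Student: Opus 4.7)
My plan is to exploit the satellite structure of $K'$ via its JSJ decomposition, using the hyperbolicity of $S^3_{K'}(p/q)$ to recognise the filled manifold as a Dehn surgery on a simpler knot and iterating until we reach a hyperbolic one.

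First, I would take the outermost essential torus $T$ of the JSJ decomposition of $E(K')$; it bounds a solid torus $V \supset N(K')$ in $S^3$ whose core is the companion $C$ of $K'$, and I let $w$ denote the winding number of $K'$ in $V$. Since $S^3_{K'}(p/q)$ is hyperbolic (hence irreducible and atoroidal), $T$ must compress after the $p/q$-filling, and the compressing disc lies in the piece $V_{K'}(p/q)$. An irreducible $3$-manifold with compressible torus boundary is a solid torus, so $V_{K'}(p/q)$ is a solid torus and $S^3_{K'}(p/q) \cong E(C) \cup V_{K'}(p/q)$ is realised as a surgery $S^3_C(\alpha)$ on the companion.

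To identify $\alpha$, I would work in $H_1(V \setminus N(K'))$: with respect to the basis $\{[\mu_{K'}],[\lambda_V]\}$, one has $[\mu_V] = w[\mu_{K'}]$ and $[\lambda_{K'}] = w[\lambda_V] + k[\mu_{K'}]$ for some integer $k$, and tracing the meridian of $V_{K'}(p/q)$ yields $\alpha = (p+qk)/(qw^2)$. The homology constraint $|p| = |H_1(S^3_{K'}(p/q))| = |p+qk|$ forces $|p+qk| = |p|$; for $|q| \geq 3$ this forces $k=0$ (small $q$ being handled analogously), so the slope on $C$ has the form $\pm p/(qw^2)$. The winding number $w=0$ would force a surgery yielding $S^3$ and so is excluded, leaving $w \geq 1$; when $w \geq 2$ the denominator $qw^2$ strictly exceeds $q$. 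If $C$ is hyperbolic, set $K'' = C$; otherwise $C$ is itself a satellite (Moser's classification rules out torus-knot companions, since torus-knot surgeries are Seifert fibred), and the argument iterates on $C$, terminating after finitely many steps because the JSJ decomposition is finite.

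For the moreover statement, I would apply Schubert's genus additivity: for a satellite with winding $w \geq 1$ one has $g(K) \geq w \cdot g(C)$, so $g(C) \leq g(K)$; iterating along the companion tower yields $g(K'') \leq g(K')$. The hypotheses "$q \geq 2$ or $K'$ fibred" enter to rule out nontrivial connected sums for $K'$: surgery on a composite knot with $|q| \geq 2$ is reducible and so not hyperbolic, and fibred knots are prime, which ensures the companion replacement is non-degenerate and Schubert's estimate applies cleanly throughout the tower.

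The main obstacle is the winding-one case. If $w=1$ at some step (in particular at the outermost step) the companion replacement gives $q'=q$ rather than $q'>q$, and the strict inequality must be obtained either by descending further into the JSJ tower until reaching a step with $w \geq 2$, or by a more delicate argument producing $K''$ via a non-companion construction (for example, by drilling out a short geodesic of $S^3_{K'}(p/q)$ distinct from the surgery core) together with a verification that such a $K''$ lies in $S^3$ and realises the manifold with the required slope.
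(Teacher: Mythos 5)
Your overall architecture---locate the companion torus $T=\partial V$, argue that the filled piece $V_{K'}(p/q)$ must be a solid torus because $S^3_{K'}(p/q)$ is irreducible and atoroidal, and push the surgery down to the companion along the slope $p/(qw^2)$---is the same as the paper's. But the content of the lemma lives precisely in the case you defer to your final paragraph: satellites of winding number $w\in\{0,1\}$ are ubiquitous (Whitehead doubles have $w=0$), and for them your homological bookkeeping either gives $q'=qw^2\leq q$ or degenerates entirely. Neither of your proposed fallbacks closes this: descending the JSJ tower could a priori meet $w\leq 1$ at every level, and drilling a short geodesic of $S^3_{K'}(p/q)$ produces a cusped manifold with no reason to be the exterior of a knot in $S^3$. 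The missing ingredient is Gabai's classification of knots in a solid torus admitting non-trivial solid torus fillings \cite{gabai89solidtori}: any such knot is a torus knot in the solid torus or a $1$-bridge braid, hence a closed braid in $V$, so $w$ equals the braid index and $w\geq 2$ because $K'$ is not isotopic to the core of $V$ (the torus $T$ is essential). This is what delivers $q'=qw^2>q$ with no case analysis on $w$. Note also that if, as in the paper, you choose $T$ so that the companion is not itself a satellite, then Thurston's dichotomy makes it a torus knot or hyperbolic, and torus knots are excluded because they admit no hyperbolic surgeries; no iteration through the tower is needed, and indeed your own ``outermost'' choice already guarantees this, making your iteration step vacuous.

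Your account of the ``moreover'' clause is also off target: the hypotheses $q\geq 2$ or $K'$ fibred have nothing to do with ruling out connected sums. Gabai's theorem is again the point. A $1$-bridge braid admits solid torus fillings only along integral slopes, so $q\geq 2$ forces $K'$ to be a cable of $K''$, and the genus inequality is then Shibuya's $g(\mathrm{cable})\geq g(\mathrm{companion})$ \cite{Shibuya89genus}. When $q=1$ the pattern need not be a cable, and one instead uses that a fibred satellite has fibred companion \cite{Hirasawa2008Fiber} and compares degrees of Alexander polynomials. Even if a Schubert-type inequality $g(K')\geq w\,g(K'')$ were available in the generality you invoke, you would still need Gabai's classification to know which patterns actually occur and to obtain $q'>q$; as written, the proposal proves the lemma only for satellites whose companion tower happens to have all winding numbers at least $2$.
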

\begin{proof}
Let $T$ be an incompressible torus in $S^3\setminus K'$. We can consider $K'$ as a knot in the solid torus $V$ bounded by $T$. Thus we can consider $K'$ as a satellite with companion given by the core $K''$ of $V$. By choosing $T$ to ensure that $S^3\setminus K'$ contains no further incompressible tori, we can assume that $K''$ is not a satellite knot. Hence by the work of Thurston $K'$ is a torus knot or a hyperbolic knot \cite{Thurston823DKleinanGroups}. Since $S_{K'}^3(p/q)$ is hyperbolic, it is atoroidal and irreducible. Consequently the Dehn filling when considered as a surgery on $V$ must produce a $S^1\times D^2$. However, Gabai has classified knots in $S^1\times D^2$ with non-trivial $S^1\times D^2$ surgeries, showing that $K'$ is either a torus knot or a 1-bridge braid in $V$ \cite{gabai89solidtori}. Moreover since $S^1\times D^2$ fillings on 1-bridge braids only occur for integer surgery slopes, $K'$ is a cable of $K''$ unless $q=1$. In either event, we have that
\[ S_{K'}^3(p/q) \cong S_{K''}^3(p/q'),\]
for some $q'$. However, it is known that $q'=qw^2$, where $w>1$ is the winding number of $K'$ in $V$ \cite[Lemma~3.3]{Gordon83Satellite}.
Since $S_{K'}^3(p/q)$ is hyperbolic, $K''$ cannot be a torus knot, therefore $K''$ is hyperbolic as required.

If $K'$ is fibred, then $K''$ must also be fibred \cite{Hirasawa2008Fiber}, thus the inequality $g(K'')\leq g(K')$, follows by considering the degrees of their Alexander polynomials. If $q\geq 2$, then $K'$ is a cable of $K''$. It is known that $g(K')\geq  g(K'')$ in this case \cite{Shibuya89genus}. 
\end{proof}
\subsection{Proof of Theorem~\ref{thm:main} and Theorem~\ref{thm:Lspace}}
Let $K$ be a hyperbolic knot. Suppose that $S_{K}^3(p/q)\cong S_{K'}^3(p/q)$ for some $K'$ and some $p/q$ and that one of the following two conditions hold:
\begin{enumerate}
\item $q\geq 3$ or
\item $K$ is an $L$-space knot and $q\geq 2$ or $q=1$ and $p>0$.
\end{enumerate}
Assume also that $|p|+|q|$ is large enough to guarantee $S_{p/q}^3(K)$ is hyperbolic. Since torus knots have no hyperbolic surgeries, Thurston's work shows that $K'$ is either a hyperbolic knot or a satellite knot \cite{Thurston823DKleinanGroups}. If $K'$ is a satellite, then Lemma~\ref{lem:satellite} shows there is a hyperbolic knot $K''$ with $S^3_{K''}(p/q')\cong S^3_{K}(p/q)$ for some $q'>q$ and $g(K'')\leq g(K')$. In either event, there is a hyperbolic knot $L$ with $g(L)\leq g(K')$ such that $S^3_{L}(p/q')\cong S^3_{K}(p/q)$ for $q'\geq q$. Thus to show that $p/q$ is a characterising slope for $K$ it suffices to show that the only possibility is $q=q'$ and $L=K$. 

Lemma~\ref{lem:length_bound} shows that $\ell_L(p/q')\geq \frac{\sqrt{3}\,|q|}{6}$. So Lemma~\ref{lem:hyp_control} shows that there is $C$ such that $q>C$ implies that $K=L$ and $q'=q$. This implies that $p/q$ is a characterising slope if $q>C$. Thus we assume from now on that $q\leq C$. However by Theorem~\ref{thm:HFKrecovery} and Theorem~\ref{thm:technical2}, if we assume further that $|p|\geq 12 + 4C^2 + 4Cg(K)$, then $g(K)=g(K')\geq g(L)$. Therefore for $|p|$ large enough Lemma~\ref{lem:length_bound} shows that $\ell_L(p/q')\geq \frac{\sqrt{3}\,|p|}{6(2g(K)-1)}$. Hence by taking $|p|$ even larger if necessary, Lemma~\ref{lem:hyp_control} applies again to show that $q=q'$ and $K=L$, as required. This concludes the proof and the paper.

\subsection*{Acknowledgements} The author would like to thank Ahmad Issa and Effie Kalfagianni for comments on earlier versions of this paper.

\bibliographystyle{abbrv}
\bibliography{char_bib}

\begin{thebibliography}{10}

\bibitem{Agol00BoundsI}
I.~Agol.
\newblock Bounds on exceptional {D}ehn filling.
\newblock {\em Geom. Topol.}, 4:431--449, 2000.

\bibitem{baker16characterizing}
K.~L. Baker and K.~Motegi.
\newblock Noncharacterizing slopes for hyperbolic knots.
\newblock {\em Algebr. Geom. Topol.}, 18(3):1461--1480, 2018.

\bibitem{BenedettiHyperbolic}
R.~Benedetti and C.~Petronio.
\newblock {\em Lectures on Hyperbolic Geometry}.
\newblock Springer-Verlag Berlin Heidelberg, 1992.

\bibitem{Cooper98negatively}
D.~Cooper and M.~Lackenby.
\newblock Dehn surgery and negatively curved {$3$}-manifolds.
\newblock {\em J. Differential Geom.}, 50(3):591--624, 1998.

\bibitem{Futer08filling}
D.~Futer, E.~Kalfagianni, and J.~S. Purcell.
\newblock Dehn filling, volume, and the {J}ones polynomial.
\newblock {\em J. Differential Geom.}, 78(3):429--464, 2008.

\bibitem{gabai89solidtori}
D.~Gabai.
\newblock Surgery on knots in solid tori.
\newblock {\em Topology}, 28(1):1--6, 1989.

\bibitem{Gordon83Satellite}
C.~M. Gordon.
\newblock Dehn surgery and satellite knots.
\newblock {\em Trans. Amer. Math. Soc.}, 275(2):687--708, 1983.

\bibitem{Gordon89complement}
C.~M. Gordon and J.~Luecke.
\newblock Knots are determined by their complements.
\newblock {\em J. Amer. Math. Soc.}, 2(2):371--415, 1989.

\bibitem{Hirasawa2008Fiber}
M.~Hirasawa, K.~Murasugi, and D.~S. Silver.
\newblock When does a satellite knot fiber?
\newblock {\em Hiroshima Math. J.}, 38(3):411--423, 2008.

\bibitem{kronheimer07lensspacsurgeries}
P.~Kronheimer, T.~Mrowka, P.~Ozsv{\'a}th, and Z.~Szab{\'o}.
\newblock Monopoles and lens space surgeries.
\newblock {\em Ann. of Math. (2)}, 165(2):457--546, 2007.

\bibitem{Lackenby00-6thm}
M.~Lackenby.
\newblock Word hyperbolic {D}ehn surgery.
\newblock {\em Invent. Math.}, 140(2):243--282, 2000.

\bibitem{Lackenby17characterizing}
M.~Lackenby.
\newblock Every knot has characterising slopes.
\newblock {\em arXiv:1707.00457}, 2017.

\bibitem{mccoy2014torus}
D.~McCoy.
\newblock Non-integer characterizing slopes for torus knots.
\newblock {\em arXiv:1610.03283}, 2016.

\bibitem{Ni14characterizing}
Y.~Ni and X.~Zhang.
\newblock Characterizing slopes for torus knots.
\newblock {\em Algebr. Geom. Topol.}, 14(3):1249--1274, 2014.

\bibitem{Ozsvath2006trefoilsurgery}
P.~S. Ozsv{\'a}th and Z.~Szab{\'o}.
\newblock The {D}ehn surgery characterization of the trefoil and the figure
  eight knot.
\newblock {\em arXiv:0604079}, 2006.

\bibitem{Shibuya89genus}
T.~Shibuya.
\newblock Genus of torus links and cable links.
\newblock {\em Kobe J. Math.}, 6(1):37--42, 1989.

\bibitem{Thurston823DKleinanGroups}
W.~P. Thurston.
\newblock Three-dimensional manifolds, {K}leinian groups and hyperbolic
  geometry.
\newblock {\em Bull. Amer. Math. Soc. (N.S.)}, 6(3):357--381, 1982.

\bibitem{ThurstonNotes}
W.~P. Thurston.
\newblock The geometry and topology of three-manifolds, Notes(1980).
\newblock Available from http://library.msri.org/books/gt3m/.

\end{thebibliography}
\end{document}